 \newtheorem{theorem}{Theorem}[section]
 \newtheorem{corollary}[theorem]{Corollary}
 \newtheorem{proposition}[theorem]{Proposition}
\theoremstyle{definition}
\theoremstyle{remark}
\newtheorem{fact*}{Fact}
\DeclareMathOperator\im{\mathrm {Im~}}
\renewcommand{\D}{\mathbb{D}}
\newcommand{\C}{\mathbb{C}}
\renewcommand{\R}{\mathbb{R}}
\newcommand{\cc}[1]{\overline{#1}}
\newcommand{\norm}[1]{\left\Vert#1\right\Vert}
\newcommand{\til}{\raise.17ex\hbox{$\scriptstyle\mathtt{\sim}$}}
\newcommand{\ph}{\varphi}
\newcommand\ep{\varepsilon}
\newcommand\beq{\begin{equation}}
\newcommand\eeq{\end{equation}}
\newcommand{\bbm}{\left[ \begin{smallmatrix}}
\newcommand{\ebm}{\end{smallmatrix} \right]}
\newcommand{\bpm}{\left( \begin{smallmatrix}}
\newcommand{\epm}{\end{smallmatrix} \right)}
\numberwithin{equation}{section}
\newlength{\Mheight}
\newlength{\cwidth}
\newcommand{\dfn}[1]{{\bf #1}\index{#1}}
\newcommand{\GG}{\textarc{f}}
\newcommand{\sov}{\textarc{m}}
\title[Automatic real analyticity]{Automatic real analyticity and a regal proof of a commutative multivariate L\"owner theorem}
\author[J. E. Pascoe]{
J. E. Pascoe
}
\address{Department of Mathematics\\
1400 Stadium Rd\\
  University of Florida\\
 Gainesville, FL 32611}
\email[J. E. Pascoe]{pascoej@ufl.edu}
\author[R. Tully-Doyle]{
Ryan Tully-Doyle
}
\address{Department of Mathematics and Physics \\
University of New Haven\\
West Haven, CT 06516 }
\email[R. Tully-Doyle]{rtullydoyle@newhaven.edu}
\date{\today}
\begin{document}

\begin{abstract}
We adapt the ``royal road" method used to simplify automatic analyticity theorems in noncommutative function theory to several complex variables.
We show that certain families of functions must be real analytic if they have certain nice properties on one dimensional slices.
Let $E \subset \R^d$ be open.
A function $f:E \to \R$ is \emph{matrix monotone lite} if $f(\ph_1(t), \ldots, \ph_d(t))$ is a matrix monotone function of $t$ whenever $t \in (0,1)$, the $\ph_i$ are automorphisms of the upper half plane, and the tuple $(\ph_1(t), \ldots, \ph_d(t))$ maps $(0,1)$ into $E$.
We use the ``royal road" to show that a function is matrix monotone lite if and only if it analytically continues to the multi-variate upper half plane as a map into the upper half plane.
Moreover, matrix monotone lite functions in two variables are locally matrix monotone in the sense of Agler-McCarthy-Young.
\end{abstract}

\maketitle

\section{Introduction}


The goal of this manuscript is to transform the ingredients from the so-called ``royal road'' proof of the noncommutative multivariate L\"owner theorem \cite{ptdroyal} and chop away all of the excess material to leave the core content of a deconstructed royal road theorem suitable for complex variables.
The royal road gives a way to lift automatic analyticity theorems, such as L\"owner's theorem, which will be described momentarily, from one variable to many variables. We apply the royal road machinery, as an example of our technique, to prove a 
commutative L\"owner type theorem, and in the process give a new proof of the relaxed Agler-McCarthy-Young theorem \cite{amyloew, pascoelownote} in two variables, while in several variables giving an entirely new theorem.  Moreover, this technique is based on complex variables and classical analysis, rather than the technically daunting Agler model machinery used in \cite{amyloew}, and thus may be significantly more flexible.

Let $f:(a,b) \to \R$ be a function. We say that $f$ is \dfn{matrix monotone} if 
\[
A \leq B \Rightarrow f(A) \leq f(B)
\]
for all $A, B$ self-adjoint of the same size with spectrum in $(a,b)$, where $A \leq B$ means that $B - A$ is positive semidefinite, and the function $f$ is evaluated via the matrix functional calculus.  The seemingly innocuous condition of matrix monotonicity is in fact very rigid, as demonstrated by K. L\"owner \cite{lo34}.

\begin{theorem}[L\"owner 1934 \cite{lo34}]\label{low}
Let $f:(a,b) \to \R$. A function $f$ is matrix monotone if and only if $f$ is real analytic on $(a,b)$ and analytically continues to the upper half plane in $\C$ as a map into the closed upper half plane.
\end{theorem}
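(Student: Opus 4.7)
The plan is to prove the two directions separately. The forward implication (Pick extension $\Rightarrow$ matrix monotone) is a direct computation from the Nevanlinna--Herglotz integral representation, while the reverse implication (matrix monotone $\Rightarrow$ Pick extension) carries the real content.

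For the easy direction, I would invoke the Nevanlinna--Herglotz representation of a holomorphic self-map $F:\Ha\to\overline{\Ha}$,
$$F(z) = \al + \beta z + \int_\R \frac{1+tz}{t-z}\,d\mu(t),$$
where $\al\in\R$, $\beta\geq 0$, and $\mu$ is a finite positive Borel measure. Because $F$ analytically continues across $(a,b)$, the measure $\mu$ is supported outside $(a,b)$. After the partial fraction identity $\frac{1+tz}{t-z} = -t + \frac{1+t^2}{t-z}$, matrix monotonicity reduces to checking that $A\mapsto (t-A)^{-1}$ is matrix increasing for self-adjoint $A$ with spectrum in $(a,b)$ and fixed $t\notin(a,b)$. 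This is the standard fact that $A\leq B$ gives $t-A\geq t-B$ of definite sign, hence $(t-A)^{-1}\leq(t-B)^{-1}$. Integration against $\mu$ preserves matrix monotonicity.

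For the hard direction, I would work in three stages. First, restricting the definition to scalar matrices shows $f$ is monotone nondecreasing, hence continuous away from countably many jump points, and inserting small off-diagonal perturbations around a $2\times 2$ diagonal matrix and taking limits yields differentiability of $f$ with locally bounded divided differences $f[x,y]=(f(x)-f(y))/(x-y)$. Second, for arbitrary $x_1,\ldots,x_n\in(a,b)$ I would derive positivity of the \emph{L\"owner matrix} $L=[f[x_i,x_j]]$ (with $f[x_i,x_i]:=f'(x_i)$) by writing $f(B)-f(A)$ for $B=A+\ep P$ as the Schur--Hadamard product of $L$ with $P$ expressed in a diagonalizing basis, and observing that the resulting matrix positivity forces $L\succeq 0$. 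Third, I would promote L\"owner matrix positivity to existence of a Pick extension by forming the reproducing kernel Hilbert space $\mathcal{H}_K$ with kernel $K(x,y):=f[x,y]$, showing that the multiplication operator $M_x$ is densely defined and symmetric, taking a self-adjoint extension $\tilde{M}$, and reading off the Nevanlinna representation of $f$ from the spectral measure of $\tilde{M}$ against a suitable cyclic vector; real analyticity on $(a,b)$ then follows automatically since Pick functions are analytic across any real interval avoiding the support of the representing measure.

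The main obstacle will be this third stage: passing from a family of matrix positivity conditions to an honest analytic continuation relies on nontrivial Hilbert space or moment-problem machinery, and it is precisely this step that the royal road strategy of the remainder of the paper is designed to systematize in the multivariate setting. A secondary challenge is executing the regularity stage cleanly enough that the L\"owner matrix identity becomes bookkeeping rather than a delicate limiting argument; here the right test inputs are $2\times 2$ matrices of the form $\mathrm{diag}(x,y) + \ep\bpm 0 & 1 \\ 1 & 0 \epm$, and the rest follows by linearizing in $\ep$.
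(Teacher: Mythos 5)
The paper does not prove Theorem~\ref{low}; it is cited as L\"owner's classical 1934 result and used as a black box, so there is no paper proof to compare against. Taken on its own, your sketch is a recognizable outline of the modern operator-theoretic proof of L\"owner's theorem (forward direction via Nevanlinna--Herglotz plus partial fractions; reverse direction via regularity, L\"owner-matrix positivity, and a reproducing-kernel / self-adjoint-extension model), and the overall architecture is sound.

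Two cautions. First, your regularity step as stated has a technical flaw: the perturbation $\ep\bpm 0 & 1 \\ 1 & 0\epm$ is \emph{not} positive semidefinite (its eigenvalues are $\pm\ep$), so $\diag(x,y)+\ep\bpm 0 & 1 \\ 1 & 0\epm$ is neither $\geq$ nor $\leq$ $\diag(x,y)$, and the monotonicity hypothesis gives you nothing directly. The usual $2\times 2$ regularity argument instead compares $\diag(x,y)$ against $\diag(x,y)+t\bpm 1 & 1 \\ 1 & 1\epm$ (a genuine rank-one PSD bump), or, more cleanly, mollifies $f$ to a smooth matrix monotone $f_\ep = f*\ph_\ep$ and proves everything for $f_\ep$ first; establishing that a matrix monotone function is automatically $C^1$ \emph{before} any smoothing is a nontrivial lemma and you should not treat it as bookkeeping. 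Second, the RKHS/self-adjoint-extension step in your third stage needs to be set up more carefully than ``multiplication by $x$ on $\mathcal{H}_K$'': on the space built from the L\"owner kernel $f[x,y]$, the relevant densely defined symmetric operator is defined on differences of kernel functions (so that the constant direction is modded out), one then shows it has a self-adjoint extension $\tilde M$ and a cyclic vector $v$ with $f(z)-f(x_0) = (z-x_0)\langle(\tilde M - z)^{-1}(\tilde M - x_0)^{-1}v,v\rangle$-type formula, from which analyticity across $(a,b)$ and the map $\Pi\to\overline\Pi$ both follow. It is worth noting that the strategy of this paper --- mollify, prove a quantitative analytic-continuation estimate, and pass to the limit by normal families --- specializes to $d=1$ and gives an alternative ``royal road'' proof of L\"owner's theorem that avoids the unbounded self-adjoint extension machinery entirely; that is exactly the point the authors are making when they describe this as an automatic analyticity theorem.
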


One natural direction for generalizing L\"owner's theorem is to ask when a matrix monotone function in $d$ variables on some real domain possesses an analytic continuation to a complex poly upper half plane $\Pi^d = \{(z_1, \ldots, z_d)\in \C^d| \im z_i >0\}$. 

Let $E \subset \R^d$ be an open set. For tuples $S = (S_1, \ldots, S_d), T = (T_1, \ldots, T_d)$,
\[
S \leq T \text{ if } S_r \leq T_r, r = 1, \ldots, d.
\]
Let $CSAM_n^d(E)$ be the set of $d$-tuples of commuting self-adjoint matrices of size $n$ with joint spectrum in $E$.

A function $f: E \to \R$ is \dfn{locally matrix monotone} if for every $n$, for every $C^1$ curve $\gamma: [0,1] \to CSAM_n^d(E)$ with $\gamma_i'(t) \geq 0$ for all $i$ and all $t \in [0,1]$, 
\[
t_1 \leq t_2 \Rightarrow f(\gamma(t_1)) \leq f(\gamma(t_2)).
\]

The following generalization of L\"owner's theorem in this setting is due to Agler, McCarthy, and Young \cite{amyloew} with a mild refinement in \cite{pascoelownote}.

\begin{theorem}[Agler, McCarthy, Young \cite{amyloew}]
A function $f: E \to \R$ is locally matrix monotone if and only if $f$ analytically continues to $\Pi^d$ as map $f: E \cup \Pi^d \to \cc{\Pi}$ in the Pick-Agler class.
\end{theorem}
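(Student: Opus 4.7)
\emph{Plan of proof.} The theorem has two directions, and I would attack them with different tools: the sufficient direction via the Agler model representation, and the necessary direction via the royal road machinery of this paper together with a final model-theoretic upgrade.

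\emph{Sufficient direction.} Suppose $f$ extends to $\Pi^d$ as a Pick-Agler class function. By definition of the class, $f$ admits an Agler-type decomposition of the form
\[
f(z) - \cc{f(w)} = \sum_{i=1}^d (z_i - \cc{w_i})\, K_i(z, w),
\]
where each $K_i$ is a positive $\B(\mathcal H)$-valued kernel. For a $C^1$ curve $\gamma$ in $CSAM_n^d(E)$ with $\gamma'_i(t) \geq 0$, apply the hereditary functional calculus to this decomposition; the derivative $\frac{d}{dt} f(\gamma(t))$ then decomposes as a sum of terms each of which is manifestly positive semidefinite, giving local matrix monotonicity.

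\emph{Necessary direction, step one.} Suppose $f$ is locally matrix monotone. Given upper-half-plane automorphisms $\ph_1, \ldots, \ph_d$ such that $t \mapsto (\ph_1(t), \ldots, \ph_d(t))$ maps $(0,1)$ into $E$, and self-adjoint matrices $A \leq B$ with spectrum in $(0,1)$, set $C(s) = (1-s) A + sB$ and $\gamma(s) = (\ph_1(C(s)), \ldots, \ph_d(C(s)))$. The tuple $\gamma(s)$ is commuting and self-adjoint with joint spectrum in $E$ because each coordinate is a function of the single self-adjoint matrix $C(s)$. Since automorphisms of the upper half plane have strictly positive derivative on the real line where finite, $\gamma'_i(s) = \ph'_i(C(s))(B-A) \geq 0$, and local matrix monotonicity yields matrix monotonicity of $f \circ \gamma$, i.e.\ the matrix monotone lite property. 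The main theorem of this paper then supplies an analytic continuation $F: E \cup \Pi^d \to \cc\Pi$.

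\emph{Necessary direction, step two: Pick-Agler upgrade.} It remains to show $F$ lies in the Pick-Agler class. For $d = 2$, this is automatic since every analytic map $\Pi^2 \to \cc\Pi$ is already Pick-Agler by Agler's bidisk theorem. For $d \geq 3$ the Pick-Agler class is strictly smaller than the class of all $\cc\Pi$-valued analytic maps on $\Pi^d$, so I would extract additional positivity from local matrix monotonicity applied to $C^1$ curves more general than those arising from one-parameter automorphism tuples, and then run a lurking-isometry argument to produce the required Agler decomposition of $F$.

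\emph{The main obstacle.} The royal road cleanly delivers the analytic continuation in all dimensions, but the Pick-Agler upgrade when $d \geq 3$ is the genuine difficulty: the positivity encoded in local matrix monotonicity on arbitrary commuting curves must be distilled into an operator-model statement. This is precisely where the original Agler-McCarthy-Young proof works hardest, and the challenge for a classical-analytic reformulation is to package that positivity without invoking Agler's model machinery from the outset.
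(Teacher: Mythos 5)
The paper does not prove this theorem at all: it is cited as a prior result of Agler, McCarthy, and Young, and the manuscript explicitly declines to re-derive the model-theoretic half of it, writing ``It seems likely that there should be a proof that once we know that a function continues to the upper half plane $\Pi^d$ that it must be in the Pick-Agler class whenever it is locally matrix monotone. However, since this would involve a significant incursion into operator theory, we eschew this endeavour.'' What the paper does prove is the weaker diet L\"owner theorem (matrix monotone lite if and only if continuable to $\Pi^d$, with no Pick-Agler clause), and it then combines that with the cited AMY result and Ando's inequality to obtain the two-variable corollary. Your step one (locally matrix monotone $\Rightarrow$ matrix monotone lite) and your two-variable reduction via Ando are precisely the logic behind the paper's ``shocking corollary,'' so that part of your plan reconstructs what the paper actually does.

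The genuine gap in your proposal is the one you yourself flag: for $d \geq 3$ the Pick-Agler upgrade is not an argument but a statement of intent (``I would extract additional positivity\ldots and run a lurking-isometry argument''). That is exactly the incursion into operator theory that the paper refuses to make, and it is the substantive content of the AMY theorem; a lurking-isometry argument needs a concrete positive kernel to act on, and nothing in the royal-road machinery, which only ever sees one-dimensional slices, produces the $d$-fold Agler decomposition. Until that step is supplied, the proposal proves the theorem only for $d \le 2$. One smaller point: the identity $\gamma'_i(s) = \ph'_i(C(s))(B-A)$ is false in general, because the derivative of $s \mapsto \ph_i(C(s))$ is the Fr\'echet derivative of the matrix function, given by the Daleckii--Krein divided-difference formula, not by a scalar chain rule; the positivity conclusion $\gamma'_i(s) \geq 0$ is nonetheless correct because M\"obius automorphisms of $\Pi$ are operator monotone on the interval in question, but the justification must pass through the divided-difference kernel rather than the naive formula you wrote.
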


The Pick-Agler class is a class of complex analytic functions that are of technical importance in operator theory.
In the case where $d = 1$ or $d = 2$, the Pick-Agler class is equivalent to the set of all analytic functions $f:\Pi^d \to \cc\Pi$ by Ando's inequality \cite{and63}. For $d>2,$ the
Pick-Agler class is a strict subset, as Ando's inequality fails \cite{par70, var74}.

A function $f:E \to \R$ is \dfn{matrix monotone lite} if $f(\ph_1(t), \ldots, \ph_d(t))$ is a matrix monotone function of $t$ whenever $t \in (0,1)$, the $\ph_i$ are automorphisms of the upper half plane, and the tuple $(\ph_1(t), \ldots, \ph_d(t))$ maps $(0,1)$ into $E$. A locally matrix monotone function is matrix monotone lite. 
We prove the following theorem.
\begin{theorem}[Diet L\"owner theorem]\label{lownerfull}
Let $E$ be a connected open set. A function $f: E \to \R$ is matrix monotone lite if and only if $f$ analytically continues to $\Pi^d$ as map $f: E \cup \Pi^d \to \cc{\Pi}$.
\end{theorem}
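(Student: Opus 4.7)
My plan is to prove the two directions separately. The reverse direction $(\Leftarrow)$ is immediate from the one-variable Löwner theorem (Theorem~\ref{low}): if $f$ extends to a holomorphic $\tilde f\colon E\cup\Pi^d\to\cc\Pi$ with real boundary values on $E$, then for any automorphism tuple $\vec\varphi$ with $\vec\varphi((0,1))\subset E$, the composition $\tilde f\circ\vec\varphi\colon\Pi\to\cc\Pi$ is a holomorphic extension of the real-valued $f\circ\vec\varphi|_{(0,1)}$, which Theorem~\ref{low} forces to be matrix monotone.

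For $(\Rightarrow)$, assume $f$ is matrix monotone lite. I first establish real analyticity on $E$. For any $\vec x_0\in E$ and any $\vec v\in\R^d$ with all positive entries, the affine tuple $\varphi_i(t)=x_{0,i}+\varepsilon(t-\tfrac12)v_i$ consists of automorphisms of $\Pi$ and maps $(0,1)$ into $E$ for $\varepsilon$ small, so matrix monotone lite together with Theorem~\ref{low} gives real analyticity of $f$ along this line segment. Varying $\vec v$ over the open cone of positive-orthant directions and invoking a standard real-analyticity-along-lines-in-an-open-cone result, $f$ is jointly real analytic on $E$ and extends to a holomorphic $f_\C$ on a complex neighborhood $E_\C\subset\C^d$. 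Next I construct the candidate extension: for any automorphism tuple $\vec\varphi$ sending an open real interval $I$ into $E$, Theorem~\ref{low} yields a holomorphic $F_{\vec\varphi}\colon\Pi\to\cc\Pi$ extending $f\circ\vec\varphi|_I$, which by the identity theorem must coincide with $f_\C\circ\vec\varphi$ on a complex neighborhood of $I$ in $\Pi$. By the transitivity of $\mathrm{Aut}(\Pi)$ on $\Pi$, every $\vec z\in\Pi^d$ admits an admissible pair $(\vec\varphi,t_0)$ with $\vec\varphi(t_0)=\vec z$, so I set $\tilde f(\vec z):=F_{\vec\varphi}(t_0)$.

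The main obstacle is showing $\tilde f$ is well-defined. A first reduction uses the invariance $F_{\vec\varphi\circ(\alpha,\dots,\alpha)}(t)=F_{\vec\varphi}(\alpha(t))$ (immediate from uniqueness of Löwner continuation) to diagonally normalize $t_0=i$, reducing the question to showing $F_{\vec\varphi}(i)$ is constant as $\vec\varphi$ varies in the torus-like fiber $\{\vec\varphi\colon\vec\varphi(i)=\vec z\}\cong(S^1)^d$ sitting in $\mathrm{Aut}(\Pi)^d$. I plan to combine two ingredients: continuity of $\vec\varphi\mapsto F_{\vec\varphi}$ in the topology of uniform convergence on compacts, obtained by a normal-family argument using that the $F_{\vec\varphi}$ are Herglotz maps converging pointwise on the real interval (where they equal $f(\vec\varphi(\cdot))$); and a rigidity argument propagating the identity $F_{\vec\varphi_\lambda}=f_\C\circ\vec\varphi_\lambda$ from a neighborhood of the real interval along a real-analytic family $\vec\varphi_\lambda$ in the fiber. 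Once well-definedness is in hand, joint holomorphy of $\tilde f$ on $\Pi^d$ will follow from separate holomorphy along a rich family of automorphism-tuple disks combined with Hartogs's theorem, and $\cc\Pi$-valuedness is inherited from each $F_{\vec\varphi}$. The hardest step, and the technical heart of the royal-road machinery, is the rigidity argument: even granting continuity, the fact that Löwner continuations agree at $i$ for all choices in the fiber requires carefully combining continuity in parameters with the identity theorem and simple connectedness of $\Pi^d$.
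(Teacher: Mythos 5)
The reverse direction and the overall architecture (establish real analyticity first, build the extension from one-variable Löwner continuations along slices, finish with Hartogs) match the paper. But there is a genuine and central gap in your forward direction: the step ``Varying $\vec v$ over the open cone of positive-orthant directions and invoking a standard real-analyticity-along-lines-in-an-open-cone result, $f$ is jointly real analytic.'' No such standard result exists in the generality you need. Real analyticity of $f$ along each one-dimensional slice does \emph{not} imply joint real analyticity, even for slices in all directions. The Bochnak--Siciak theorem, which is the closest classical statement, needs $f\in C^\infty$ (and analyticity along \emph{all} lines, not just a cone), neither of which is available here. The obstruction is quantitative: one must control the radii of convergence and the Taylor coefficients of the one-variable slices \emph{uniformly} as the base point and direction vary, and then piece the slice continuations into a genuine multidimensional analytic continuation. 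This is precisely what the paper's machinery buys: the control function $\gamma$ in the definition of a noble class bounds the slice coefficients via the Nevanlinna representation, and the quantitative wedge-of-the-edge theorem (Theorem~\ref{wedge}) turns those bounds into a uniform polydisk of holomorphy, yielding the deconstructed royal road Theorem~\ref{deconstruct}. Your proof imports the conclusion of that theorem while skipping the work.

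Two smaller remarks. First, the paper reduces to the orthant $(0,\infty)^d$ by conformal invariance and then uses the affine slices $g_{x,y}(w)=f(x+yw)$ rather than general automorphism tuples; this makes the well-definedness of the extension a short consequence of real analyticity plus the identity theorem. Your rigidity/normal-families/simple-connectedness argument for well-definedness is considerably more elaborate than necessary; once you have joint real analyticity (the hard part you skipped), the simpler route is available. Second, your account of the reverse direction and your use of Hartogs's theorem at the end are both correct and match the paper's Proposition~\ref{lownerorthant}.
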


Theorem \ref{lownerfull} follows directly from Proposition \ref{lownerorthant} as the definition of matrix monotone lite is conformally invariant.

Note that our theorem solves the mystery of how the Pick functions in $d$ variables which have a real-valued continuation to some $E \subseteq \R^d$ relate to matrix monotonicity; they are exactly the matrix monotone lite functions. Moreover, we have the following shocking corollary.

\begin{corollary}
In two variables, a function is locally matrix monotone if and only if it is matrix monotone lite.
\end{corollary}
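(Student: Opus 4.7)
The plan is to deduce the corollary immediately from chaining together the three equivalences now available for $d=2$: Theorem \ref{lownerfull}, the Agler-McCarthy-Young theorem, and Ando's inequality. The corollary is essentially a tautology once these three pieces are lined up correctly, so the "proof" is really a matter of verifying the dictionary between the several characterizations.

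First I would handle the easy direction. The paper already observes that locally matrix monotone functions are matrix monotone lite: given a curve of the form $t \mapsto (\varphi_1(t),\ldots,\varphi_d(t))$ with each $\varphi_i$ an automorphism of the upper half plane mapping $(0,1)$ into the real line, one evaluates at any tuple of commuting self-adjoint matrices $A_1, \ldots, A_d$ and uses that $\varphi_i(A_i)$ is a $C^1$, monotone increasing path of commuting self-adjoints; applying the definition of locally matrix monotone to this path recovers matrix monotonicity of the restriction in $t$.

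For the nontrivial direction, I would proceed as follows. Suppose $f:E \to \R$ is matrix monotone lite, with $E \subseteq \R^2$ a connected open set. By Theorem \ref{lownerfull}, $f$ analytically continues to $\Pi^2$ as a map $f: E \cup \Pi^2 \to \cc{\Pi}$. Now I invoke Ando's inequality, which says exactly that in two variables every analytic map $\Pi^2 \to \cc{\Pi}$ lies in the Pick-Agler class; this is the fact the paper records just after the statement of the Agler-McCarthy-Young theorem. Thus the extension of $f$ is a Pick-Agler function with a real boundary extension to $E$. The Agler-McCarthy-Young theorem then produces, from this Pick-Agler extension, local matrix monotonicity of $f$ on $E$.

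The main obstacle is not really in the argument but in checking that the formulations match: the version of Agler-McCarthy-Young cited is that $f$ is locally matrix monotone if and only if it has a Pick-Agler continuation $f:E\cup \Pi^d \to \cc{\Pi}$, and Theorem \ref{lownerfull} delivers a continuation of exactly the right type (into $\cc{\Pi}$, real-valued on $E$). Since by Ando any such continuation is automatically Pick-Agler when $d=2$, the two characterizations coincide on the nose, and the corollary follows. In short, in two variables, matrix monotone lite, matrix monotone in the Agler-McCarthy-Young sense, and Pick functions with real boundary values on $E$ are three names for the same class.
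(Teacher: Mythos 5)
Your proof matches the paper's intended argument exactly: chain Theorem \ref{lownerfull} with Ando's inequality (in $d=2$ the Pick--Agler class is all analytic self-maps of the biupper halfplane) and then the Agler--McCarthy--Young theorem, noting separately that locally matrix monotone trivially implies matrix monotone lite. One small slip in your sketch of the easy direction: to verify matrix monotone lite one evaluates $f(\varphi_1(T),\ldots,\varphi_d(T))$ at a \emph{single} self-adjoint $T$ with spectrum in $(0,1)$ (all coordinates are functions of the same $T$, hence automatically commute), not at a tuple $A_1,\ldots,A_d$ with each $\varphi_i$ applied to its own $A_i$; the path to feed into local monotonicity is $t\mapsto\bigl(\varphi_1((1-t)A+tB),\ldots,\varphi_d((1-t)A+tB)\bigr)$, which has nonnegative derivative in each slot since each $\varphi_i$ is one-variable matrix monotone.
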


It seems likely that there should be a proof that once we know that a function continues to the upper half plane $\Pi^d$ that it must be in the Pick-Agler class whenever it is locally matrix monotone. However, since this would involve a significant incursion into operator theory, we eschew this endeavour.

\section{Automatic analyticity}

A \dfn{fiefdom} is a set of domains, denoted by $\GG$, in $\R^d$ satisfying:
\begin{description}
\item[Translation invariance] For all $G \in \GG$ and $r \in \R^d$, $G + r \in \GG$.
\item[Scale invariance] If $t > 0$ and $G \in \GG$, $tG \in \GG$.
\item[Closure under intersection] For all $G, H \in \GG$, $G \cap H \in \GG$.
\item[Locality] For any $x \in \R^d$ and $\ep > 0$, $B_\R(x, \ep) \in \GG$.
\end{description}

The class of convex sets in $\R^d$ is an example of a fiefdom, as is the class of all open sets in $\R^d$.

A \dfn{noble class} $\sov$ is a set of functions on domains contained in a fiefdom $\GG$ satisfying:
\begin{description}
\item[Functions] For all $G \in \GG$, $\sov(G)$ is a set of locally bounded measurable functions.
\item[Closure under localization] If $f \in \sov(G)$ and $H \subseteq G$ then $f|_H \in \sov(H)$.
\item[Closure under convolution] The set of functions $\sov(G)$ is convex and closed under pointwise weak limits.
\item[One variable knowledge] If $a_i \leq b_i$ for each $i,$ then
\[
f_{\cc{ab}}(t) := f\left(\frac{1-t}{2} a + \frac{1 +t}{2} b \right)
\]
analytically continues to $\D$ as a function of $t$.
\item[Control] There is a map $\gamma$ taking each pair $(x,f)$ to a non-negative number satisfying 
\begin{enumerate}
\item For each $\ep > 0$ there is a universal constant $c(\ep)$ such that $\inf_{X \in B_\R(x, \ep)} \gamma(x,f) \leq c(\ep)\norm{f}_{B_\R(x, \ep)}$.
\item There is a universal positive valued function $e$ on $\R^+$ satisfying the following. Write $f_{\cc{ab}}(t) = \sum a_n t^n$. Then, 
\[
\norm{a_n} \leq \gamma(x, f) e(\norm{b - a}).
\]
\item If $H \subseteq G$ and $x \in H$ then $\gamma(x, f|_H) \leq \gamma(x, f)$.
\end{enumerate} 
\end{description}

These definitions are drawn from the notions of \emph{dominions} and \emph{sovereign classes} established in the noncommutative setting in \cite{ptdroyal}.

The set of matrix monotone lite functions in $d$ variables on the fiefdom of open domains is a noble class equipped with the control function $\gamma(x, f) = \norm{f(x)} + \norm{Df(x)}$. This is essentially an exercise using the Nevanlinna representation, the details of which are given in \cite[Proposition 3.1]{ptdroyal}. We give a proof here for completeness.

\begin{proposition}
	The set of matrix monotone lite functions in $d$ variables is a noble class on the fiefdom of open domains with the control function $\gamma(x, f) = \norm{f(x)} + \norm{Df(x)}.$

	Here, $\|Df(X)\| = \sup_{h\in (0,\infty)^d} \frac{|Df(X)[h]|}{\|h\|}.$
\end{proposition}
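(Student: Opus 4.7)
The plan is to verify each axiom of a noble class, reducing all but the \textbf{Control} axiom to one-variable L\"owner and elementary facts about matrix monotone functions. For the \textbf{Functions}, \textbf{Closure under localization}, and \textbf{Closure under convolution} axioms: the restriction of any matrix monotone lite $f$ to a segment in a strictly positive direction is matrix monotone in the scalar parameter, hence real analytic by L\"owner's theorem applied on the slice, and thus locally bounded and measurable; localization manifestly preserves the matrix monotone lite property; and pointwise limits and convex combinations of one-variable matrix monotone functions remain matrix monotone, which transfers slice-by-slice.

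For \textbf{One variable knowledge}, when $a_i < b_i$ for each $i$ the affine map $\phi_i(t) = \tfrac{a_i+b_i}{2} + \tfrac{b_i-a_i}{2} t$ is an automorphism of $\Pi$, so $f_{\cc{ab}}$ is matrix monotone on $(-1,1)$. L\"owner's theorem produces an analytic extension to $\Pi$ as a Pick function, and Schwarz reflection across $(-1,1)$ (where $f_{\cc{ab}}$ is real) extends it to $\C \setminus ((-\infty, -1] \cup [1, \infty)) \supset \D$. The degenerate case $a_i = b_i$ is handled by fixing those coordinates and invoking the result for the matrix monotone lite function in fewer variables.

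The substantive axiom is \textbf{Control}. Property (2) follows from the Nevanlinna representation
\[
g(z) = \alpha + \beta z + \int_{\R \setminus (-1,1)}\left(\frac{1}{s-z} - \frac{s}{1+s^2}\right)\, d\mu(s),\quad \beta \geq 0,
\]
of the Pick extension $g := f_{\cc{ab}}$, where $\mu$ is supported off $(-1,1)$ since $g$ is real there. Expanding at $0$, the Taylor coefficients satisfy $a_n = \int s^{-n-1}\, d\mu$ for $n \geq 2$, dominated in absolute value by $\int s^{-2}\, d\mu \leq g'(0) = \tfrac{1}{2}Df(x)[b-a] \leq \tfrac{1}{2}\norm{Df(x)}\,\norm{b-a}$ for $x := (a+b)/2$, using $|s| \geq 1$ on $\mathrm{supp}(\mu)$; combined with $|a_0| = |f(x)|$, this yields $|a_n| \leq \gamma(x,f)\, e(\norm{b-a})$ with $e(r) = \max(1, r/2)$. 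For property (1), take $X = x$ and set $M := \norm{f}_{B_\R(x,\ep)}$: for any unit vector $h$ in the open positive orthant, the rescaled slice $s \mapsto f(x + \ep s h)$ is Pick in $s$, real and bounded by $M$ on $(-1,1)$; evaluating the corresponding Nevanlinna identity at $\pm 1/2$ forces $\beta + \int d\mu/(s^2 - 1/4) \leq 2M$, and since $1/s^2 \leq 1/(s^2 - 1/4)$ on the support of $\mu$, the derivative at $0$ is at most $4M$. Unscaling gives $Df(x)[h] \leq 4M/\ep$, so $\norm{Df(x)} \leq 4M/\ep$ and $\gamma(x,f) \leq (1 + 4/\ep)M$. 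Property (3) is immediate. The main technical point is the uniformity in $h$ of the derivative estimate, which holds because the rescaling $s = t/\ep$ makes every slice a Pick function on $(-1, 1)$ with the same bound $M$, independent of the direction.
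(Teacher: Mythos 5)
Your proof is correct and follows the same general strategy as the paper (reduce everything to one-variable L\"owner and bound via the Nevanlinna representation), but you use a genuinely different flavor of that representation, which leads to different computations in the Control axiom. The paper represents each slice in the compactly supported form $f(zh) = a_0 + \int_{[-1,1]} \tfrac{z}{tz+1}\,d\mu(t)$, reads off $|a_n| \leq a_1$ immediately from $|t| \leq 1$, and bounds $a_1 = Df(0)[h]$ via the antisymmetrization $f(zh) - f(-zh)$, whose Taylor series has only positive coefficients. You instead use the standard Cauchy-kernel representation $\alpha + \beta z + \int(\tfrac{1}{s-z}-\tfrac{s}{1+s^2})\,d\mu$ with $\mu$ supported off $(-1,1)$ (the two are related by $t = 1/s$), extract $|a_n| \leq \int s^{-2}\,d\mu \leq g'(0)$ from $|s|\geq 1$, and bound $g'(0)$ by evaluating $g$ at $\pm 1/2$. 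Both are clean; your $\pm 1/2$ evaluation is arguably more transparent than the paper's symmetrization, and you are more explicit about why $\mu$ is supported off $(-1,1)$ (Schwarz reflection) and about the degenerate case $a_i = b_i$ in the one-variable-knowledge axiom, which the paper passes over. One small arithmetic slip: your $\pm 1/2$ evaluation actually gives $g'(0) \leq \beta + \int \tfrac{d\mu}{s^2-1/4} = g(1/2)-g(-1/2) \leq 2M$, not $4M$; this is harmless since it only improves your constant $c(\ep)$ to $1+2/\ep$.
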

\begin{proof}
	The properties of closure under localization and convolution are clear from the definition. One variable knowledge comes from the fact that when restricted to $1$-dimensional positively-oriented slices,
	$f$ is a classical one-variable matrix monotone function. The function $f$ is monotone on $\mathbb{R}^d$ and thus locally bounded.
 	
	Now, we show that the control properties hold.
	Fix $\ep >0$. Suppose that $B_{\R}(x, \ep)$ is contained in the domain of $f$. Without loss of generality, $0 = x$.
	Fix $h$ in $B_{\R}(0,\ep)$ such that each $h_i \geq 0.$
	Note $f(th)$ is a matrix monotone function in one variable $t$ on $(-1,1)$ and therefore analytically continues to a self-map of the upper half plane and is subject to classical integral representations.	
	So $f(zh)$ has a Nevanlinna representation \cite{nev22} given by
	\begin{align*}
	f(zh) &= a_0 + \int_{[-1,1]} \frac{z}{tz + 1} \, d\mu(t) \\
	&= a_0 + z \sum_{i=0}^\infty \int t^i z^i \, d\mu(t).
	\end{align*}
	Note that this shows that $|a_n| = \int |t|^{n-1}  \, d\mu(t)\leq \int  \, d\mu(t)  = a_1 $ for $n\geq 1.$
	Moreover,
	\[
	f(zh) - f(-zh) = 2 z \sum_{i=0}^\infty \int z^{2i} t^{2i} \, d\mu.
	\]
	This shows that
	\[
	\norm{Df(0)[h]} \leq \norm{f}_{B_\R(0,\ep)}. 
	\]
	Therefore, 
	\[
	\norm{Df(0)[h]} \leq \frac{1}{\ep} \norm{f}_{B_\R(0,\ep)},
	\]
	which is bounded by $(1 + \frac{1}{\ep}) \norm{f}_{B_\R(0, \ep)}$.

\end{proof}

We will need the following quantitative wedge-of-the-edge theorem, which appeared as Corollary 2.3 in \cite{ptdroyal}, but is essentially previous work in \cite{pascoecmb, pascoeblmswedge}.

\begin{theorem}[The quantitative wedge-of-the-edge theorem]\label{wedge}
There are universal constants $\delta, \ep >0$  satisfying the following.
Let $h_d$ be a sequence of generalized homogeneous polynomials of degree $d$ such that
$\sum \|h_d(x)\|$ is bounded by $1$ on $[0,1]^d.$
The formula $\sum h_d(z)$ defines an analytic function on $B_{\C}(0,\delta)$ which is bounded by $\ep.$
\end{theorem}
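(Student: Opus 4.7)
The approach is to convert the hypothesized bound on $[0,1]^d$ into a term-by-term exponential bound $\|h_k(z)\| \le (Cr)^k$ on the complex ball $B_\C(0,r)$, and then sum geometrically. First, since $\|h_k(x)\| \ge 0$, the hypothesis $\sum_k \|h_k(x)\| \le 1$ on $[0,1]^d$ immediately yields the term-by-term bound $\|h_k(x)\| \le 1$ for every $k$ and every $x \in [0,1]^d$.

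Next, I would polarize. To each $h_k$ associate its unique symmetric $k$-linear companion $P_k$ with $h_k(x) = P_k(x, \ldots, x)$, related to $h_k$ by
\[
k! \, P_k(x_1, \ldots, x_k) = \sum_{S \subseteq \{1, \ldots, k\}} (-1)^{k - |S|} h_k\!\left(\sum_{i \in S} x_i\right).
\]
Plugging in $x_i \in [0, 1/k]^d$ keeps each argument of $h_k$ inside $[0,1]^d$, so $\|P_k(x_1,\ldots,x_k)\| \le 2^k/k!$ there. Rescaling each slot by $k$ lifts this to a bound of $(2k)^k/k!$ on $[0,1]^d$ in each argument; writing $x_i = x_i^+ - x_i^-$ with $x_i^\pm \in [0,1]^d$ extends it to $[-1,1]^d$ with a further factor of $2^k$; and writing each complex argument as $z_i = a_i + ib_i$ with $a_i,b_i \in [-1,1]^d$ extends it to the complex $\infty$-polyball of radius $1$ with one more factor of $2^k$. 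The end result is $\|P_k\| \le (8k)^k/k! \le (8e)^k$ by Stirling, uniformly across the complex unit polyball in each argument.

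Specializing via $h_k(z) = P_k(z,\ldots,z)$ and exploiting homogeneity of degree $k$ gives $\|h_k(z)\| \le (8er)^k$ for $z$ in the Euclidean ball $B_\C(0,r)$, which sits inside the $\infty$-polyball of the same radius. Choosing any $\delta < 1/(8e)$ makes $\sum_k \|h_k(z)\| \le \sum_k (8e\delta)^k$ converge geometrically to a universal $\ep$ on $B_\C(0,\delta)$, and Weierstrass' theorem delivers analyticity of the sum. The main obstacle I anticipate is marshalling the constants across polarization, rescaling, and the two real-to-complex extensions so that the $k^k$ factor arising from the dilation $x_i \mapsto x_i/k$ is not fatal; Stirling's inequality $k! \ge (k/e)^k$ just manages to absorb $k^k/k!$ into $e^k$ so that geometric decay survives.
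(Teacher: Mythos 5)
The paper does not actually prove this statement; it quotes the theorem from prior work (Corollary 2.3 of \cite{ptdroyal}, tracing to \cite{pascoecmb, pascoeblmswedge}), so there is no in-paper argument to compare your attempt against. On its own merits, your polarization proof is correct and self-contained. Non-negativity gives the term-by-term bound $\|h_k(x)\|\le 1$ on $[0,1]^d$; the polarization identity with all arguments restricted to $[0,1/k]^d$ keeps every evaluation point inside $[0,1]^d$, yielding $\|P_k\|\le 2^k/k!$ there; multilinearity recovers $(2k)^k/k!$ on $([0,1]^d)^k$ after rescaling; the positive/negative and real/imaginary splittings each cost one more factor of $2^k$; and Stirling's $k!\ge(k/e)^k$ converts $(8k)^k/k!$ into $(8e)^k$. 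Specializing on the diagonal and using $\|z\|_\infty\le\|z\|_2$ gives $\|h_k(z)\|\le(8er)^k$ on $B_\C(0,r)$, so any $\delta<1/(8e)$ produces a geometrically convergent majorant, and a normal-families/Weierstrass argument delivers analyticity and a universal bound $\ep$. The constants are not optimal, but the theorem only asks for universal ones, and importantly your bounds are dimension-independent, as the statement requires. The only caveat worth flagging is that the argument presupposes that ``generalized homogeneous polynomial of degree $k$'' admits a symmetric $k$-linear companion via polarization; this is automatic for (possibly vector-valued) commutative polynomials over a characteristic-zero field, which is the setting actually used in the proof of Theorem \ref{deconstruct}, so this is fine here. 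The literature that the paper cites establishes the result (also in a noncommutative formulation) by a finite-difference estimate that is morally the same as your polarization step, so there is no substantive methodological divergence.
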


We now prove the deconstructed royal road theorem.

\begin{theorem}[The deconstructed royal road theorem]\label{deconstruct}
Any function in a noble class is real analytic.
\end{theorem}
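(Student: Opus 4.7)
The plan is to fix an arbitrary $x_0 \in E$; by translation invariance of the fiefdom we may assume $x_0 = 0$, and by locality we pick $r > 0$ with $B_\R(0,r) \subseteq E$. I aim to construct an analytic continuation of $f$ to some complex ball $B_\C(0,\delta')$ by feeding one-variable Taylor data into the quantitative wedge-of-the-edge theorem (Theorem \ref{wedge}).

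For each $h \in [0,1]^d$, the one-variable knowledge axiom, applied to the segment from $-rh$ to $rh$ (valid since $-rh \leq rh$ componentwise), yields that $f_{\cc{-rh,rh}}(t) = f(trh)$ extends analytically to $\mathbb D$, with Taylor expansion $f(trh) = \sum_{n} a_n(h)\,t^n$. Control property (2) gives the uniform bound $|a_n(h)| \leq \gamma(0,f)\,e(2r\sqrt d) =: C$, while reparameterizing $f(t(sh)) = f((ts)h)$ produces the homogeneity $a_n(sh) = s^n a_n(h)$ for $s \in (0,1]$.

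The central step is to upgrade each $a_n$ from a homogeneous scalar function on the positive octant to a genuine homogeneous polynomial $P_n$ of degree $n$ on $\R^d$, as required by Theorem \ref{wedge}. The strategy is first to establish $f \in C^\infty$ near $0$: one-variable knowledge along the coordinate direction $e_i$ produces all $\partial_i^n f(0) = n!\,a_n(e_i)$, while applying the axiom to positive two-parameter slices $g(s,t) := f(s e_i + t e_j)$, together with local boundedness and a separate-to-joint real analyticity input (e.g.\ in the spirit of Siciak), yields joint real analyticity of $g$ and hence all mixed partials; iterating across all coordinates produces every $\partial^\alpha f(0)$. Uniqueness of Taylor coefficients for the one-variable analytic function $t \mapsto f(th)$ then forces $a_n(h) = \sum_{|\alpha|=n}\tfrac{h^\alpha}{\alpha!}\partial^\alpha f(0) =: P_n(h)$, an honest homogeneous polynomial of degree $n$. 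I expect this polynomiality step to be the main obstacle, since the axioms provide Taylor data only along one-dimensional positive-direction lines and the polynomial structure across $d$ variables must be extracted by cross-coordinate regularity arguments.

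With the $P_n$'s in hand, the uniform bound $|P_n(y)| \leq C$ on $[0,1]^d$ combined with a rescaling $y \mapsto \rho y$ for $\rho \in (0,1)$ yields $\sum_n \|\rho^n P_n(y)\| \leq C/(1-\rho)$ on $[0,1]^d$. After normalizing, Theorem \ref{wedge} produces universal $\delta,\ep>0$ such that $F(z) := \sum_n \rho^n P_n(z)$ extends analytically to $B_\C(0,\delta)$, and summing the series at real $y$ in the positive cube gives $F(y) = \sum_n a_n(y)\rho^n = f(\rho r y)$. Re-running the construction with slightly shifted base points (using translation invariance of the fiefdom) yields analytic extensions whose overlaps cover a full complex neighborhood of $0$, so $f$ is real analytic at $0$; arbitrariness of $x_0 \in E$ then gives real analyticity of $f$ throughout $E$.
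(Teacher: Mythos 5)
Your proposal diverges from the paper's proof in a crucial way, and the divergence is exactly where the gap lies. The paper's key move is to \emph{mollify first}: it convolves $f$ with a smooth bump to produce $f_\alpha = \ph_\alpha * f$, which is automatically $C^\infty$, so the degree-$n$ Taylor pieces of $f_\alpha$ at a base point are honest homogeneous polynomials with no further work. Closure under convolution keeps $f_\alpha$ in the noble class, the wedge-of-the-edge theorem then gives uniform complex analytic continuation of $f_\alpha$, and a normal-families argument as $\alpha \to 0$ transfers this to $f$. Your plan tries to establish polynomiality of the coefficients $a_n$ for $f$ itself, and the step you propose for this --- two-variable slices plus a ``separate-to-joint real analyticity input in the spirit of Siciak,'' then iterate over coordinates --- is both underdeveloped and self-undermining. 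It is underdeveloped because passing from two-variable joint analyticity for all pairs to all mixed partials $\partial^\alpha f(0)$ is not a one-line iteration; and it is self-undermining because any Siciak-type theorem strong enough to do the job already yields joint real analyticity of $f$ outright, making the subsequent invocation of Theorem~\ref{wedge} redundant. In effect you would be importing a machine at least as heavy as the one the paper is trying to replace, whereas the mollification trick reduces the hard regularity step to something trivial.

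There is a second, more localized gap: you bound $|a_n(h)|$ by $\gamma(0,f)\,e(2r\sqrt d)$, treating $\gamma(0,f)$ as a finite quantity. The control axioms do not guarantee this; property (1) only bounds $\inf_{X \in B_\R(x,\ep)} \gamma(X,f)$ by $c(\ep)\|f\|$, i.e.\ it promises the existence of \emph{some} nearby base point with controlled $\gamma$, not that $\gamma$ is finite at the point you picked. The paper explicitly chooses such a good base point $y \in B_\R(0,\delta/2)$ with $\gamma(y,f) \leq 2c(\delta)\|f\|$ before expanding, and gets analyticity at $0$ as a byproduct of the $\delta$-ball around $y$. You should incorporate this selection step; otherwise the constant $C$ in your argument may be infinite for a general noble class.
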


\begin{proof}
We will show that a function in a noble class defined on $B_\R(0,2)$ which is bounded by $1$ is real analytic at $0$. Let $\ph$ be a compactly supported positive smooth function on $\R^d$. Define 
\[
\ph_\alpha(x) := \frac{1}{\alpha}\ph\left(\frac{1}{\alpha}x\right).
\]
Consider $f_\alpha = \ph_\alpha * f$. Note that by closure under convolution that $f_\alpha|_{B_\R(0,2 - \ep)}$ will be in our noble class for small enough $\alpha$. Now choose $y \in B_\R(0, \frac{\delta}{2})$ such that $\gamma(y, f) \leq 2c(\delta) \norm{f}_{B_\R(0, \frac{\delta}{2})}$, where $\delta$ is the constant in Theorem \ref{wedge}. Note that $f_\alpha$ is smooth at $y$, and by one variable knowledge we have that $f_\alpha(y + x) = \sum h_n(x)$ for $x \in [0,1]^d$. By Theorem \ref{wedge}, $f_\alpha$ analytically continues to $B_\R(y,\delta)$ and is bounded by $\ep$ there. Therefore, $f_\alpha$ is analytic on $B_\R(0,\frac{\delta}{2})$ and bounded by $\ep$. Taking $\alpha$ to $0$ implies that $f$ analytically continues and is bounded by $\ep$ there by a normal families argument.
\end{proof}

\section{L\"owner's theorem}
We now prove L\"owner's theorem for the positive orthant, which by conformal invariance implies Theorem \ref{lownerfull}.
\begin{proposition}\label{lownerorthant}
Let $f:(0,\infty)^d \to \R$. The function $f$ is matrix monotone lite if and only $f$ analytically continues to the upper half-plane $\Pi^d$. 
\end{proposition}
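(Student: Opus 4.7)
The $\Leftarrow$ direction follows from Theorem~\ref{low}: if $f$ extends to a holomorphic $F: \Pi^d \to \cc{\Pi}$, then for any automorphisms $\ph_1, \dots, \ph_d$ of $\Pi$ whose tuple sends $(0,1)$ into $(0,\infty)^d$, the composition $z \mapsto F(\ph_1(z), \dots, \ph_d(z))$ is a holomorphic map $\Pi \to \cc{\Pi}$, so its real boundary values $t \mapsto f(\ph_1(t),\dots,\ph_d(t))$ on $(0,1)$ are matrix monotone by the one-variable L\"owner theorem.

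For the $\Rightarrow$ direction, the set of matrix monotone lite functions forms a noble class by the preceding proposition, so Theorem~\ref{deconstruct} yields that $f$ is real analytic on $(0,\infty)^d$. This produces a local complex analytic extension $F_{\mathrm{loc}}$ on a neighborhood $V \subset \C^d$ of $(0,\infty)^d$. Next, for each $\xi, \eta \in (0,\infty)^d$, the affine maps $\ph_i(t) = \xi_i + t\eta_i$ are automorphisms of $\Pi$ sending $(0,1)$ into $(0,\infty)$, so matrix monotone lite gives that $t \mapsto f(\xi + t\eta)$ is classically matrix monotone on a real interval. By Theorem~\ref{low} it extends to a Pick function $h_{\xi,\eta}: \Pi \to \cc{\Pi}$, and uniqueness of analytic continuation forces $h_{\xi,\eta}(z) = F_{\mathrm{loc}}(\xi + z\eta)$ on the common domain; in particular $\im F_{\mathrm{loc}} \geq 0$ on $V \cap \Pi^d$.

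For the global extension, I define $F: \Pi^d \to \cc{\Pi}$ by $F(w) := H_w(i)$, where $H_w: \Pi \to \cc{\Pi}$ is the unique Pick extension of $u \mapsto f(\re w + u \im w)$. This latter function is classically matrix monotone on $(u_0(w), \infty)$ with $u_0(w) := \max_i(-\re w_i/\im w_i)$, by matrix monotone lite applied to the affine slice $\xi + u\,\im w$ with $\xi := \re w + T\,\im w \in (0,\infty)^d$ for any $T > u_0(w)$. Independence of the choice of $T$, agreement with $F_{\mathrm{loc}}$ on $V \cap \Pi^d$, and recovery of the boundary values $F(w) \to f(x)$ as $w \to x \in (0,\infty)^d$ all follow from uniqueness of Pick extensions: all admissible parametrizations $w = \xi' + z' \eta'$ with $\xi', \eta' \in (0,\infty)^d, z' \in \Pi$ force $\eta'$ to be a positive multiple of $\im w$, so they merely reparametrize the same Pick function along the same complex line in $\C^d$.

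The main obstacle is verifying joint holomorphy of $F$ on $\Pi^d$. One strategy: use the Nevanlinna representation of each $h_{\xi, \eta}$, whose parameters depend real analytically on $(\xi, \eta)$ by real analyticity of $f$, to show that $F$ is real analytic on $\Pi^d$. Since $F$ equals the holomorphic $F_{\mathrm{loc}}$ on the open set $V \cap \Pi^d$ and the Cauchy-Riemann equations form a real analytic condition, unique continuation of real analytic functions on the connected domain $\Pi^d$ propagates holomorphy from $V \cap \Pi^d$ to all of $\Pi^d$. An alternative route is to continue $F_{\mathrm{loc}}$ analytically along the one-parameter families of complex lines in directions in $(0,\infty)^d$ via L\"owner and deduce joint holomorphy by a Hartogs-style argument; either way, the resulting $F: \Pi^d \to \cc{\Pi}$ is the desired holomorphic extension.
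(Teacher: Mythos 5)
Your proposal tracks the paper's proof almost step for step: converse via one-variable L\"owner; forward direction via Theorem~\ref{deconstruct} for real analyticity, one-variable Pick extensions along affine slices with direction in the positive orthant, and gluing along overlapping real intervals. Your observation that any admissible parametrization $w = \xi' + z'\eta'$ of a point $w \in \Pi^d$ forces $\eta'$ to be a positive scalar multiple of $\im w$ (so all such parametrizations merely reparametrize the same complex line) is a sharper justification of well-definedness than the paper makes explicit. The one place where you and the paper diverge is the final joint-holomorphy step. The paper invokes Hartogs' theorem by claiming that the set of directions $y$ for which $z = x+yw$ has nonempty interior; but for fixed $z$ this set is only the ray of positive multiples of $\im z$, which has empty interior for $d \geq 2$, so the paper's argument as literally written does not go through. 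You flag this as ``the main obstacle'' and propose two alternatives, the more promising being real-analytic propagation of the Cauchy--Riemann equations from the tube where $F_{\mathrm{loc}}$ lives. However, that strategy still hinges on showing $F$ is real analytic on $\Pi^d$, which you attribute to real-analytic dependence of the Nevanlinna parameters on $(\xi,\eta)$ without proof -- nontrivial, since the Nevanlinna measure is an infinite-dimensional parameter and does not obviously vary analytically. In short: your structure is correct and matches the paper, and you are more careful about well-definedness, but the joint-holomorphy step remains an acknowledged gap in your write-up (as, arguably, it is in the paper's own terse Hartogs claim).
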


\begin{proof}
The converse direction follows directly from L\"owner's theorem in one variable.

Suppose that $f$ is matrix monotone lite. By the deconstructed royal road Theorem \ref{deconstruct}, $f$ is real analytic on $(0,\infty)^d$. Additionally, for each $x \in \R^d$ and $y \in (0,\infty)^d$, the induced function $g_{x,y}(w) = f(x + yw)$ is matrix monotone and thus has a well-defined extension to all $w$ in the upper half plane $\Pi.$ Any point $z$ in the upper half plane $\Pi^d$ can be written in the form $x + yw$. Moreover, if $x_1 + y_1w_1 = x_2 +y_2w_2,$
then $g_{x_1,y_1}(w_1) = g_{x_2,y_2}(w_2)$ by the real analyticity of $f$ on $(0,\infty)^d$ and the fact that the relation holds when $x_1 + y_1w_1, x_2 +y_2w_2 \in (0,\infty)^d.$ Therefore, we have a well-defined 
extension of $f$ to $\Pi^d$ via the formula $f(x+yw)= g_{x,y}(w).$

It remains to show that the extension is analytic. Note that, for $z \in \Pi^d$, for each $y \in (0,\infty)^d$ such that there exists an $x, w$ such that $z=x+yw,$ we have that the directional derivative of $f$ in the direction
$y$ exists. Note that the set of such $y$ has nonempty interior, and therefore the derivative exists in a full basis set of directions. Thus, by Hartog's theorem, $f$ is analytic, as each of the directional derivatives exist in some coordinate system. 
\end{proof}

\bibliography{references}
\bibliographystyle{plain}

\end{document}